\documentclass[12pt,a4paper,oneside]{amsart}
\usepackage{amsmath,amstext,amssymb,amsopn,amsthm,color}

\usepackage[T1]{fontenc}

\theoremstyle{plain}
\newtheorem{thm}{Theorem}[section]
\newtheorem{lem}[thm]{Lemma}

\newtheorem{cor}[thm]{Corollary}
\theoremstyle{definition}

\newtheorem*{rem*}{Remark}

\newcommand{\R}{\mathbb{R}}

\newcommand{\C}{\mathbb{C}}
\newcommand{\E}{\mathbb{E}}

\newcommand{\p}{\mathbb{P}}
\newcommand{\pr}{\mathbb{P}}

\renewcommand{\leq}{\leqslant}
\renewcommand{\geq}{\geqslant}

\newcommand{\lmu}{\stackrel{\mu}{\lesssim}}

\newcommand{\appmu}{\stackrel{\mu}{\approx}}

\def\({\left(}
\def\){\right)}
\def\[{\left[}
\def\]{\right]}
\def\<{\langle}
\def\>{\rangle}

\begin{document}
\title[Exit times densities  of Bessel process]{Exit times densities  of  Bessel process}
\author{Grzegorz Serafin}

\address{ Grzegorz Serafin \\
Department of Mathematics\\ Faculty of Fundamental Problems of Technology\\  Wroc{\l}aw University of Technology\\ ul.
Wybrze{\.z}e Wys\-pia{\'n}skiego 27, 50-370 Wroc{\l}aw\\
Poland
}
\email{ grzegorz.serafin@pwr.edu.pl}

\keywords{Bessel process, exit time density, sharp estimate, asymptotics, Brownian motion, ball}
\subjclass[2010]{60J60, 60J65}

\thanks{The project was funded by the MNiSW grant no. IP2012018472.}

\maketitle

\begin{abstract}
We examine  the  density functions  of the first exit times of the Bessel process  from  the intervals $[0,1)$ and $(0,1)$. First, we express them  by means of the transition density function of the killed process. Using that relationship  we provide precise estimates and asymptotics of the exit time densities. In particular, the results hold for the first exit time of the $n$-dimensional Brownian motion from a ball.
\end{abstract}

\section{Introduction}

The Bessel process  $R^{(\mu)}(t)$  with index $\mu\in\R$ is a well known diffusion on $[0,\infty)$.   For indices of the form $\mu=n/2-1$, $n=1, 2, 3,...$, it  may be described as a norm of $n$-dimensional Euclidean Brownian motion. The process  is also closely related to the  geometric Brownian motion by the Lamperti relation.  Furthermore,  in form of the Bessel-Brownian diffusion, it plays a crucial role in approach to hyperbolic Brownian motion \cite{BTFY, BR} and is used to represent  symmetric stable processes as well \cite{MO}. Finally,  it is  exploit to model a stock prices and has further applications to mathematical finance \cite{D, GY, Y}.

In the context of Bessel process, the killed process was recently of special interest \cite{BM2, BM1, BMR, BR, HM1, HM2, U}.   In the paper, we examine the exit time density of the Bessel process starting from $x>0$ and killed when it reaches a fixed  \mbox{level $a>x$}.   Due to a scaling property it is enough to consider $a=1$. Apparently, there are in the literature noticeably more results concerning starting point greater than the  killing level. Reasons of this disproportion are not clear.  Bessel process killed when exiting $[0,1)$ is interesting not only from probabilistic but also from analytical point of view, since its transition density is well known as a Fourier-Bessel heat kernel. One of obstacles when dealing with this case may be its complex nature. Technical complications appear even for $\mu=-1/2$ (see \cite{PSZ}), it is when the Bessel process corresponds to the one-dimensional Brownian motion. Our aim is to provide asymptotics and uniform estimates of the exit time density in whole range of index $\mu\in\R$. Analogous results are already known in case $x>a$ \cite{BMR, HM1, U}, but are obtained by usage of  completely different methods. Note that presented in the paper result hold, as a special case, for the first exit time of the Brownian motion from a ball and, up to the knowledge of the author, they were not known before.

One of the difficulties to deal with is the  behaviour of the Bessel process at zero, which depend on the value of the index $\mu$. For $\mu\geq0$ the process never riches zero, and for $\mu\leq-1$ we impose that the point zero is killing. In case $\mu\in(-1,0)$ we  may consider one of both: killing or reflecting condition at zero.  Let $T_{a}$ be the first hitting time of the Bessel process at the point $a\geq0$. Additionally, we introduce the first exit time $T_{a,b}=T_a\wedge T_b$ from the interval $(a,b)$, where $0\leq a<x<b$. According to the dual nature of the point zero we define for $\mu>-1$ and reflecting condition at zero
$$q_b^{(\mu)}(t,x)=\p^x\(T_b\in dt\),\ \ \ x\in(0,1), t>0.$$
Furthermore, when $\mu<0$ and zero is killing we denote
$$q_{a,b}^{(\mu)}(t,x,y)=\p^x\(T_{a,b}\in dt;R^{(\mu)}\(T_{a,b}\)=y\),$$
where $x\in(a,b)$, $y\in\{a,b\}$ and $t>0$. In 1980, J.T. Kent yielded in his paper  \cite{K} the following series formula
\begin{equation}\label{eq:q01series}
q_1^{(\mu)}(x,t)=x^{-\mu}\sum_{n=1}^\infty j_{\mu,n}\frac{J_\mu(j_{\mu,n}x)}{J_{\mu+1}(j_{\mu,n})}  e^{-j_{\mu,n}^2t/2}.
\end{equation}
Similar results may be obtained also for the function $q_{0,1}^{(\mu)}(t,x,y)$, which is presented in Section 3. Those formulae are convenient when dealing with large times. We focus on the other case, i.e. when $t$ is small. The above-given sum is oscillating  and, in that case,   the cancellations between the terms really matter in the context of asymptotic behaviour. 

The main idea  of our approach to the problem  is to express the exit times densities in terms of the density of the killed Bessel process. This kind of formulae  seems to be quite general, but the mentioned singularity of the process at zero requires a separate consideration. The obtained representations enable us to use some results and methods provided in \cite{MSZ}, where the density of the Bessel process killed when hitting the level $1$ was estimated. One of the our key tools is an imitation of a reflection principle, which lets us efficiently approximate the  density of the killed process  for some range of parameters. Properties of Bessel  functions are also intensively exploit. 

The paper is organized as follows. In Section $2$ we introduce some notation and gather relevant facts concerning Bessel functions and the Bessel process. Section $3$ is devoted to representations of the densities of the first exit times and theirs estimates. The precise asymptotic behaviour is provided in Section $4$.

\section{Preliminaries}
\subsection{Notation}
For two positive functions $f$ and $g$ we denote  $f\lesssim g$ whenever there exists a constant $c>0$ such that $f<cg$ for all arguments. If $f\lesssim g$ and $g\lesssim f$ we write $f\approx g$. Additionally,  if the constant $c$ depends on an a parameter, we put this parameter over the sign $\lesssim$ or $\approx$. 
 Furthermore, for $x>0$ we use notation $f(x)=O_\mu(x)$, which throughout the paper means that there exist  positive constants $c_1=c_1(\mu)$ and $c_2=c_2(\mu)$, such that $|f(x)|<c_1x$ for $|x|<c_2$.

\subsection{Bessel functions }

The Bessel function of the first kind $J_\mu(z)$ is given by the series
$$J_\mu(z) = \sum_{k=0}^\infty \left(\frac{z}{2}\right)^{\mu+2k}\frac{1}{k!\Gamma(k+\mu+1)}\/,\quad \arg(z)<\pi\/,\quad \mu\in\R.$$
 One can deduce from the definition the asymptotic behaviour of the function  at zero 
\begin{align}\label{eq:Jat0}
\lim_{z\rightarrow 0}J_\mu(z)z^{-\mu}=1/\Gamma\(\mu+1\).
\end{align}
Combining this with the formula  8.451(1) in \cite {GR} we obtain 
\begin{align}\label{eq:estJ}
\left|J_\mu(z)\right|\lmu\frac{z^{\mu}}{(1+z)^{\mu+1/2}}.
\end{align}
 For $\mu>-1$ the  function $J_\mu(z) $ has infinitely many real zeros $(j_{\mu,k})_{k=1}^\infty$, which have the following properties (see \cite{W}, p. 199, 506)
\begin{align}\label{eq:j}
\lim_{k\rightarrow\infty}\frac{j_{\mu,k}}{k}&=\pi,\\\label{eq:Jj}
J_{\mu+ 1}(j_{\mu, k})&=\frac{(-1)^{k+ 1}}\pi\sqrt{\frac{2}{k}}  \(1 + O_\mu\(\frac1k\)\).
\end{align}
The modified Bessel function of the first kind is given by $I_\mu(z)=J_{\mu}(iz)$. The function is positive for $z>0$ and its asymptotic behaviour is given by  (cf. \cite{GR}, formulae  8.445 and 8.451(5))
\begin{align}\label{eq:asympIinfty}
I_{\mu}(z)&=\frac{e^{z}}{\sqrt{2\pi z}}\(1+O_\mu\(\frac1z\)\),\ \ \ \ \ z\rightarrow\infty,\\
\label{eq:asympI0}
I_{\mu}(z)&=\frac{z^\mu}{2^\mu\Gamma\(\mu+1\)}\(1+O_\mu\(z^2\)\),\ \ \ \ \ z\rightarrow0.
\end{align}
This implies
\begin{equation}\label{est:I}
I_{\mu}(z)\appmu \frac{z^\mu}{(1+z)^{\mu+1/2}}e^{z},\ \ \ z>0.
\end{equation}
Furthermore, there are provided in  \cite{L} the following bounds for ratio $I_\mu(y)/I_{\mu}(x)$  of modified Bessel functions of the first kind
\begin{align} \label{ineq:L<}
\frac{I_{\mu}(y)}{I_{\mu}(x)}<\(\frac{y}{x}\)^\mu e^{y-x},\ \ \ \mu>-1/2,\\\label{ineq:L>}
\frac{I_{\mu}(y)}{I_{\mu}(x)}>\(\frac{x}{y}\)^\mu e^{y-x},\ \ \ \mu>1/2.
\end{align}
where $0<x<y$. However, we will need in the sequel results of that kind valid for all  $\mu>-1$. We  prove therefore  inequalities which are slightly weaker but hold in the whole required range of parameters.

\begin{lem}\label{lem:I/I}
For $\mu>-1$ and $0<x<y$ we have
$$
\(\frac{y}{x}\)^{\mu+2} e^{y-x}>\frac{I_{\mu}(y)}{I_{\mu}(x)}>\(\frac{x}{y}\)^{\mu+4} e^{y-x}.
$$
\end{lem}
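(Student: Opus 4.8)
The plan is to deduce the lemma from the sharper one-sided bounds \eqref{ineq:L<} and \eqref{ineq:L>}, which already hold for indices greater than $-1/2$ and $1/2$ respectively, by using the three-term recurrence $I_{\nu-1}(z)-I_{\nu+1}(z)=\tfrac{2\nu}{z}I_\nu(z)$ (see, e.g., \cite{W}) to raise the index $\mu$ into those admissible ranges. The only other ingredient is the elementary mediant inequality: for positive $a_1,a_2,b_1,b_2$,
$$\min\left(\frac{a_1}{a_2},\frac{b_1}{b_2}\right)\leq\frac{a_1+b_1}{a_2+b_2}\leq\max\left(\frac{a_1}{a_2},\frac{b_1}{b_2}\right).$$
Taking $\nu=\mu+1$ in the recurrence gives the decomposition $I_\mu(z)=I_{\mu+2}(z)+\tfrac{2(\mu+1)}{z}I_{\mu+1}(z)$, in which, for $\mu>-1$, both summands are strictly positive on $(0,\infty)$ and the indices $\mu+1,\mu+2$ both exceed $-1/2$.

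For the upper bound I would apply the mediant inequality to this decomposition evaluated at $y$ and at $x$, obtaining
$$\frac{I_\mu(y)}{I_\mu(x)}\leq\max\left(\frac{I_{\mu+2}(y)}{I_{\mu+2}(x)},\ \frac{x}{y}\,\frac{I_{\mu+1}(y)}{I_{\mu+1}(x)}\right),$$
and then estimate both entries by \eqref{ineq:L<} (legitimate since $\mu+1,\mu+2>-1/2$): the first is less than $(y/x)^{\mu+2}e^{y-x}$, the second less than $\tfrac{x}{y}(y/x)^{\mu+1}e^{y-x}=(y/x)^\mu e^{y-x}$. As $y>x$, the maximum of the two is $(y/x)^{\mu+2}e^{y-x}$, which is precisely the asserted upper bound; strictness is inherited from \eqref{ineq:L<}.

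For the lower bound the analogous step yields $\frac{I_\mu(y)}{I_\mu(x)}\geq\min\left(\frac{I_{\mu+2}(y)}{I_{\mu+2}(x)},\ \tfrac{x}{y}\tfrac{I_{\mu+1}(y)}{I_{\mu+1}(x)}\right)$. Here \eqref{ineq:L>} applies to $I_{\mu+2}$ (as $\mu+2>1/2$) and gives $\frac{I_{\mu+2}(y)}{I_{\mu+2}(x)}>(x/y)^{\mu+2}e^{y-x}>(x/y)^{\mu+4}e^{y-x}$, but it need not apply to $I_{\mu+1}$ when $\mu\in(-1,-1/2]$ — this is the one point where a little extra work is needed, and it is the main (minor) obstacle. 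I would remove it by running the recurrence once more, $I_{\mu+1}(z)=I_{\mu+3}(z)+\tfrac{2(\mu+2)}{z}I_{\mu+2}(z)$, and applying the mediant inequality together with \eqref{ineq:L>} again, now to the indices $\mu+2$ and $\mu+3$, both above $1/2$; this gives $\tfrac{x}{y}\tfrac{I_{\mu+1}(y)}{I_{\mu+1}(x)}>(x/y)^{\mu+4}e^{y-x}$. Hence both entries of the minimum strictly exceed $(x/y)^{\mu+4}e^{y-x}$, which is the desired lower bound. The loss in the exponents (from $\mu$ to $\mu+2$ and $\mu+4$) is exactly the price of the two index shifts, and it is harmless for the applications in the sequel.
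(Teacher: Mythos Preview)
Your proof is correct and rests on the same two ingredients as the paper's: the recurrence $I_\mu=I_{\mu+2}+\tfrac{2(\mu+1)}{z}I_{\mu+1}$ and the Laforgia bounds \eqref{ineq:L<}--\eqref{ineq:L>}, applied after shifting the index into the admissible range. The difference is in packaging: you organize both bounds via the mediant inequality, whereas the paper argues directly. For the upper bound this is purely cosmetic. For the lower bound the paper instead keeps the coefficient $\tfrac{2(\mu+1)}{y}$ (with the ``wrong'' argument $y$) in the bracket and uses the algebraic rewrite
\[
I_{\mu+2}(x)+\tfrac{2(\mu+1)}{y}I_{\mu+1}(x)=I_\mu(x)-\tfrac{y-x}{y}\cdot\tfrac{2(\mu+1)}{x}I_{\mu+1}(x)>\tfrac{x}{y}\,I_\mu(x),
\]
first for $\mu>-1/2$ (exponent $\mu+3$) and then once more for $\mu>-1$ (exponent $\mu+4$). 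Your mediant argument bypasses this manipulation and is arguably cleaner; the paper's version, on the other hand, yields the slightly sharper intermediate exponent $\mu+3$ for $\mu>-1/2$, which is not needed later anyway.
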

\begin{proof}
The proof is based on the equality (see \cite{GR} 8.486 1.)
\begin{equation}\label{eq:recI}
I_{\mu}(z)=I_{\mu+2}(z)+\frac{2(\mu+1)}{z}I_{\mu+1}(z),\ \ \ z\in\C-\{0\}, \ \mu\in\R.
\end{equation}
Using it and the formula  (\ref{ineq:L<}) we get  for $\mu>-1$ and  $0<x<y$ 
\begin{align*}
I_{\mu}(y)&=I_{\mu+2}(y)+\frac{2(\mu+1)}{y}I_{\mu+1}(y)\\
&<\(\frac{y}{x}\)^{\mu+2}e^{y-x}I_{\mu+2}(x)+\frac{2(\mu+1)}{y}\(\frac{y}{x}\)^{\mu+1}e^{y-x}I_{\mu+1}(x)\\
&<\(\frac{y}{x}\)^{\mu+2}e^{y-x}\[I_{\mu+2}(x)+\frac{2(\mu+1)}{x}I_{\mu+1}(x)\]\\
&=\(\frac{y}{x}\)^{\mu+2}e^{y-x} I_{\mu}(x),
\end{align*}
 which is equivalent to the first inequality in the thesis. The lower bound will be proved  in two steps, since (\ref{ineq:L>}) works only for $\mu>1/2$ and the formula (\ref{eq:recI}) let us enlarge the range of $\mu$ only by $1$.  First, assume that $\mu>-1/2$. Then, by (\ref{ineq:L>}) and (\ref{eq:recI}), we obtain
\begin{align*}
I_{\mu}(y)&=I_{\mu+2}(y)+\frac{2(\mu+1)}{y}I_{\mu+1}(y)\\
&>\(\frac{x}{y}\)^{\mu+2}e^{y-x}\[I_{\mu+2}(x)+\frac{2(\mu+1)}{y}I_{\mu+1}(x)\]\\
&=\(\frac{x}{y}\)^{\mu+2}e^{y-x}\[I_{\mu}(x)-\frac{y-x}{y}\frac{2(\mu+1)}{x}I_{\mu+1}(x)\]\\
&>\(\frac{x}{y}\)^{\mu+2}e^{y-x}\(1-\frac{y-x}{y}\) I_{\mu}(x)\\
&=\(\frac{x}{y}\)^{\mu+3}e^{y-x} I_{\mu}(x).
\end{align*}
Analogously we get for $\mu>-1$ 
\begin{align*}
I_{\mu}(y)&>\(\frac{x}{y}\)^{\mu+3}e^{y-x}\[I_{\mu+2}(x)+\frac{2(\mu+1)}{y}I_{\mu+1}(x)\]\\
&>\(\frac{x}{y}\)^{\mu+4}e^{y-x} I_{\mu}(x).
\end{align*}
This completes the proof.
\end{proof}

\subsection{Bessel process}
We denote by $R^{(\mu)}(t)$ the Bessel process with index $\mu\in\R$ and starting from the point $x\in[0,\infty)$. It is a diffusion on $[0,\infty)$ with infinitesimal generator   given by
\begin{equation}\label{eq:generator}
\mathcal L=\frac12\frac{\partial^2 }{\partial x^2}+\frac{2\mu+1}{2x}\frac{\partial}{\partial x}.
\end{equation}

Let $p^{(\mu)}(t,x,y)$ be the transition density function (with respect to the speed measure $m(dx)=2x^{2\mu+1}$) of the  process $R^{(\mu)}(t)$. If $\mu>-1$  and $0$ is reflecting, we have
\begin{align}\label{eq:p}
p^{(\mu)}(t;x,y)&=\frac{1}{2t}(xy)^{-\mu}\exp\(-\frac{x^2+y^2}{2t}\)I_\mu\(\frac{xy}t\), \ \ \ x,y>0,\\\label{eq:p0}
p^{(\mu)}(t;0,y)&=\frac{1}{(2t)^{\mu+1}\Gamma(\mu+1)}\exp\(-y^2/2t\),\ \ \ y>0.
\end{align}
Applying (\ref{est:I}) to the above-given formulae  we obtain 
\begin{equation}\label{est:dens}
p^{(\mu)}(t;x,y)\appmu\frac{e^{-(x-y)^2/2t}}{\({yx+t}\)^{\mu+1/2}\sqrt t},\ \ \ \mu>-1.
\end{equation}
Consider now the Bessel process starting from $x>0$ and  killed when hitting the level $1$ . To distinguish two possible ways of behaviour of the process at zero, we will denote by $p_{1}^{(\mu)}(t;x,y)$, $\mu>-1$,  the transition density  of the process killed only at $1$  and by $p_{0,1}^{(\mu)}(t;x,y)$, $\mu<0$, the transition density  of the process killed  at $1$ and \mbox{at $0$} (equivalently: killed when exiting from the interval $(0,1)$).
Both of those functions may be expressed by Hunt formula, which, in case of  $p_{1}^{(\mu)}(t;x,y)$, takes the form
\begin{equation}
\label{eq:Hunt}
p_{1}^{(\mu)}(t,x,y)= p^{(\mu)}(t,x,y)-\E^x[t>T_1;p^{(\mu)}(t-T_1,1,y)],
\end{equation}
where $t>0,\ x,y\in (0,1)$. An another formula formula is also known, which follows from general theory of compact operators:
\begin{equation}\label{eq:p01series}
p_{1}^{(\mu)}(t,x,y)=(xy)^{-\mu}\sum_{n=1}^\infty \frac{J_\mu(j_{\mu,n}x)J_\mu(j_{\mu,n}y)}{\left|J_{\mu+1}(j_{\mu,n})\right|^2}  e^{-j_{\mu,n}^2t/2}.
\end{equation}
Estimates of $p_1^{(\mu)}(t,x,y)$ were recently provided in \cite{MSZ} 
\begin{align}\label{eq:estp1}
p_1^{(\mu)}(t,x,y)\appmu\(1\wedge\frac{(1-x)(1-y)}{t}\)\exp\(-\frac12j^2_{\mu,1}t\)p^{(\mu)}(t,x,y).
\end{align}
Notation and basic properties for the first exit times of the Bessel process  were presented in  Introduction. However, some complementation is needed. Since $R^{(-1/2)}(t)$ corresponds to the one-dimensional Brownian motions, we have (\cite{BS}  3.0.6 (a), (b) \mbox{p. 212})
\begin{equation}
\label{eq:p1/2}
q^{(-1/2)}_{0,1}(t,x,y)=\sum_{k=-\infty}^{\infty}\frac{\left|x-y\right|+2k}{\sqrt{2\pi}t^{3/2}}\exp\({-\frac{(\left|x-y\right|+2k)^2}{2t}}\),
\end{equation} 
where $x\in(0,1)$, $y\in\{0,1\}$. Moreover, the following scaling property holds for $a>0$ and $x\in(0,a)$, $y\in\{0,a\}$:
\begin{align}\label{eq:scalingq}
q^{(\mu)}_{a}(t,x)&=\frac{1}{a^2}\,q^{(\mu)}_{1}\(\frac t{a^2},\frac{x}{a}\),\\
q^{(\mu)}_{0,a}(t,x,y)&=\frac{1}{a^2}\,q^{(\mu)}_{0,1}\(\frac t{a^2},\frac{x}{a},\frac{y}{a}\).
\end{align}
The laws $\pr^{(\mu)}_x$ and $\pr^{(\nu)}_x$ of Bessel processes with  indices $\mu\in\R$ and $\nu\in\R$, respectively, are absolutely continuous and the corresponding Radon-Nikodym derivative is described by
\begin{equation}\label{eq:ac}
\left.\frac{d\pr^{(\mu)}_x}{d\pr^{(\nu)}_x}\right|_{\mathcal{F}_t}=\left(\frac{R^{(\nu)}(t)}{x}\right)^{\mu-\nu}\exp\left(-\frac{\mu^{2}-\nu^2}{2}\int_{0}^{t}\frac{ds}{\[R^{(\nu)}(s)\]^2}\right)\/,
\end{equation}
where $x>0$,  and the above given formula holds $\pr^{(\nu)}_x$-a.s. on $\{T_0>t\}$. Taking $\nu=-\mu$, one can see that potential theory of processes with opposite indices are closely related. In particular, for $\mu<0$ we have
\begin{align}\label{eq:acp}
p_{0,1}^{(\mu)}(t,x,y)&=(xy)^{-2\mu}p_1^{(-\mu)}(t,x,y),\\\label{eq:acq}
q^{(\mu)}_{0,1}(t,x,1)&=x^{-2\mu}q^{(-\mu)}_{1}(t,x).
\end{align}

\section{Formulae and estimates}

The discussion presented in the paper is based on the below-given theorem. The densities of exit times from  intervals $(0,1)$ and $[0,1)$ are there represented by means  of the density of the killed process. Note that if  the considered intervals were bounded away from zero, the general theory of diffusion processes could be applied. 

\begin{thm}\label{thm:qasp}
Let $x\in(0,1)$. For $\mu>-1$  we have
\begin{equation}\label{eq:qasp01}
q_1^{(\mu)}(t,x)=-\frac{\partial}{\partial y}p_1^{(\mu)}(t,x,1).
\end{equation}
Furthermore, for $\mu<0$ we have
\begin{align}\label{eq:qasp011}
q_{0,1}^{(\mu)}(t,x,1)&=-\frac{\partial}{\partial y}p_{0,1}^{(\mu)}(t,x,1),\\\nonumber
q_{0,1}^{(\mu)}(t,x,0)&=\lim_{y\rightarrow0}y^{2\mu+1}\frac{\partial}{\partial y}p_{0,1}^{(\mu)}(t,x,y)\\\label{eq:qasp010}
&=-2\mu x^{-2\mu}\, p^{(-\mu)}_{0,1}(t,x,0).
\end{align}
\end{thm}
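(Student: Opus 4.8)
The plan is to identify $q_1^{(\mu)}(t,x)$ and $q_{0,1}^{(\mu)}(t,x,\cdot)$ with boundary fluxes of the corresponding killed heat kernels---the classical recipe for a diffusion on a bounded interval---while handling the singular endpoint $0$ by hand. For \eqref{eq:qasp01}, write the survival probability as the total mass of the killed semigroup, $\p^x(T_1>t)=\int_0^1 p_1^{(\mu)}(t,x,y)\,2y^{2\mu+1}\,dy$, so that $q_1^{(\mu)}(t,x)=-\partial_t\,\p^x(T_1>t)$. Putting the generator \eqref{eq:generator} in divergence form, $\mathcal{L}f=\tfrac{1}{2y^{2\mu+1}}\partial_y(y^{2\mu+1}\partial_y f)$, and using that $p_1^{(\mu)}(t,x,y)$ is symmetric in $(x,y)$ (see \eqref{eq:p01series}) and solves the heat equation $\partial_t u=\mathcal{L}u$ in each variable, one differentiates under the integral, substitutes the equation, and integrates by parts:
\[
q_1^{(\mu)}(t,x)=-\int_0^1\partial_y\big(y^{2\mu+1}\partial_y p_1^{(\mu)}(t,x,y)\big)\,dy=-\partial_y p_1^{(\mu)}(t,x,1)+\lim_{y\to0^+}y^{2\mu+1}\partial_y p_1^{(\mu)}(t,x,y).
\]
So \eqref{eq:qasp01} reduces to the vanishing of the lower boundary term for $\mu>-1$.

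For that I would combine Hunt's formula \eqref{eq:Hunt} with the explicit kernel \eqref{eq:p}: since $z^{-\mu}I_\mu(z)$ is a power series in $z^2$, the maps $y\mapsto p^{(\mu)}(t,x,y)$ and $y\mapsto p^{(\mu)}(t,1,y)$ are smooth near $y=0$ with $y$-derivative of order $y$, whence $\partial_y p_1^{(\mu)}(t,x,y)=O(y)$ and $y^{2\mu+1}\partial_y p_1^{(\mu)}(t,x,y)=O(y^{2\mu+2})\to 0$ as $y\to0^+$, which is exactly where the hypothesis $\mu>-1$ enters. The analytic manipulations just used---differentiating under the integral, the heat equation, the integration by parts---I would justify from the eigenfunction expansion \eqref{eq:p01series}: by $j_{\mu,n}\sim\pi n$ from \eqref{eq:j}, the two-sided bound on $|J_{\mu+1}(j_{\mu,n})|$ from \eqref{eq:Jj} and the bound \eqref{eq:estJ} on $J_\mu$, the summands and their $t$- and $y$-derivatives grow at most polynomially in $n$, so $e^{-j_{\mu,n}^2t/2}$ makes the series for $p_1^{(\mu)}$, $\partial_t p_1^{(\mu)}$, $\partial_y p_1^{(\mu)}$, and $\int_0^1 p_1^{(\mu)}(t,x,y)\,2y^{2\mu+1}\,dy$ converge absolutely and locally uniformly on $(0,\infty)\times[0,1]$.

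For \eqref{eq:qasp011}--\eqref{eq:qasp010} with $\mu<0$ (so $-\mu>-1$) I would bootstrap from the reflecting case via the absolute-continuity identities \eqref{eq:acp}--\eqref{eq:acq}. By \eqref{eq:acq}, $q_{0,1}^{(\mu)}(t,x,1)=x^{-2\mu}q_1^{(-\mu)}(t,x)$, which by \eqref{eq:qasp01} equals $-x^{-2\mu}\partial_y p_1^{(-\mu)}(t,x,1)$; substituting $p_1^{(-\mu)}(t,x,y)=(xy)^{2\mu}p_{0,1}^{(\mu)}(t,x,y)$ from \eqref{eq:acp} and using $p_{0,1}^{(\mu)}(t,x,1)=0$ (clear from \eqref{eq:acp} and \eqref{eq:p01series}), a one-line differentiation at $y=1$ collapses this to $-\partial_y p_{0,1}^{(\mu)}(t,x,1)$. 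Next, the integration by parts of the first paragraph applied to $p_{0,1}^{(\mu)}$ (whose regularity follows from that of $p_1^{(-\mu)}$ through \eqref{eq:acp}), now with a killing---not reflecting---endpoint at $0$, yields $q_{0,1}^{(\mu)}(t,x)=-\partial_y p_{0,1}^{(\mu)}(t,x,1)+\lim_{y\to0^+}y^{2\mu+1}\partial_y p_{0,1}^{(\mu)}(t,x,y)$; subtracting the exit-through-$1$ part just found identifies $q_{0,1}^{(\mu)}(t,x,0)$ with the remaining limit. Finally \eqref{eq:acp} evaluates that limit, since $y^{2\mu+1}\partial_y p_{0,1}^{(\mu)}(t,x,y)=x^{-2\mu}\big(-2\mu\,p_1^{(-\mu)}(t,x,y)+y\,\partial_y p_1^{(-\mu)}(t,x,y)\big)\to-2\mu x^{-2\mu}p_1^{(-\mu)}(t,x,0)$ (the second term dying by the $O(y)$ bound), giving the asserted formula since killing at $0$ is vacuous for the index $-\mu>0$, i.e.\ $p_{0,1}^{(-\mu)}=p_1^{(-\mu)}$.

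The main obstacle is the endpoint $0$: one has to control the decay of $\partial_y p_1^{(\mu)}(t,x,y)$, and the convergence of $y^{2\mu+1}\partial_y p_{0,1}^{(\mu)}(t,x,y)$, as $y\to 0^+$ with enough uniformity to pass them through the limit, and---less routinely---to be sure that the flux through $0$ really is $\p^x(T_{0,1}\leq t,\ R^{(\mu)}(T_{0,1})=0)$ rather than a spurious mass defect; routing $q_{0,1}^{(\mu)}(t,x,1)$ through \eqref{eq:acp}--\eqref{eq:acq} and obtaining $q_{0,1}^{(\mu)}(t,x,0)$ by difference is precisely what makes that last point clean.
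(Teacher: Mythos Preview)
Your proposal is correct and follows essentially the same route as the paper: write the survival probability as the mass of the killed kernel, differentiate in $t$, convert to the divergence form of $\mathcal L$, integrate by parts, and then handle the $\mu<0$ case by bootstrapping through \eqref{eq:acp}--\eqref{eq:acq} and obtaining $q_{0,1}^{(\mu)}(t,x,0)$ by subtraction. The only notable difference is your justification that the boundary term at $y=0$ vanishes: you argue via Hunt's formula \eqref{eq:Hunt} and the evenness of $y\mapsto p^{(\mu)}(t,\cdot,y)$ near $0$ (from the power series of $z^{-\mu}I_\mu$), whereas the paper reads this off directly from the eigenfunction expansion \eqref{eq:p01series} together with \eqref{eq:j}, \eqref{eq:Jj}, \eqref{eq:estJ}; both are valid, and your observation that $p_{0,1}^{(-\mu)}=p_1^{(-\mu)}$ for $-\mu>0$ is exactly what reconciles the statement \eqref{eq:qasp010} with the expression $p_1^{(-\mu)}(t,x,0)$ that the computation actually produces.
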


\begin{proof}
Since the density function of the first exit time is given as a derivative of the distribution function, we get\begin{align}\nonumber
q_{1}^{(\mu)}(t,x)=&-\frac\partial{\partial t}\p^x\(T_1>t\)\\\label{aux1}
=&-2\frac\partial{\partial t}\int_0^1p_{1}^{(\mu)}(t;x,y)y^{2\mu+1}dy.
\end{align}
By (\ref{eq:j}), (\ref{eq:Jj}) and (\ref{eq:estJ}) we may differentiate  the series (\ref{eq:p01series}) term by term and the obtained function is uniformly integrable on $(0,1)$ for $t$ bounded away from zero. This allows us to switch the order of integration and differentiation in (\ref{aux1}).
Using heat equation  (cf. (\ref{eq:generator})) and integrating by parts, we get
\begin{align}\nonumber
q_{1}^{(\mu)}(t,x)=&-\int_0^1\[\frac{\partial^2}{\partial y^2}p_{1}^{(\mu)}(t;x,y)+\frac{2\mu+1}{y}\frac{\partial}{\partial y}p_{1}^{(\mu)}(t;x,y)\]y^{2\mu+1}dy\\\nonumber
=&-\int_0^1\frac{\partial}{\partial y}\[y^{2\mu+1}\frac{\partial}{\partial y}p_{1}^{(\mu)}(t;x,y)\]dy\\\label{eq:qasp}
=&-\left.\[\frac{\partial}{\partial y}p_{1}^{(\mu)}(t;x,y)\]y^{2\mu+1}\right|_{y=0}^{y=1}.
\end{align}
Applying  (\ref{eq:j}), (\ref{eq:Jj}) and (\ref{eq:estJ}) to the series (\ref{eq:p01series}) we may deduce   also that   \mbox{$\lim_{y\rightarrow0}\[\frac{\partial}{\partial y}p_{1}^{(\mu)}(t;x,y)y^{2\mu+1}\]=0$}, and this gives us (\ref{eq:qasp01}).  Assume now that $\mu<0$ and $0$ is killing. By  (\ref{eq:acp}) and (\ref{eq:acq}), the formula (\ref{eq:qasp011}) comes from the following
\begin{align*}
q^{(\mu)}_{0,1}(t,x,1)&=-\,x^{-2\mu}\left.\frac{\partial}{\partial y}p^{(-\mu)}_{1}(t,x,y)\right|_{y=1}\\
&=-\,x^{-2\mu}\left.\frac{\partial}{\partial y}\left((xy)^{2\mu}p^{(\mu)}_{0,1}(t,x,y)\right)\right|_{y=1}\\
&=-2\mu p^{(\mu)}_{0,1}(t,x,1)-\frac{\partial}{\partial y}p^{(\mu)}_{0,1}(t,x,1)\\
&=-\frac{\partial}{\partial y}p^{(\mu)}_{0,1}(t,x,1).
\end{align*}
 To prove the last part of the theorem, note that in the same way as  (\ref{eq:qasp}) we get 
\begin{align*}
\p^x\(T_{0,1}\in dt\)&=q_{0,1}^{(\mu)}(t,x,0)+q_{0,1}^{(\mu)}(t,x,1)\\
&=-\left.\[\frac{\partial}{\partial y}p_{0,1}^{(\mu)}(t;x,y)\]y^{2\mu+1}\right|_{y=0}^{y=1}.
\end{align*}
Thus, by (\ref{eq:qasp011}), we arrive at
\begin{align*}
q^{(\mu)}_{0,1}(t,x,0)=\lim_{y\rightarrow0}y^{2\mu+1}\frac{\partial}{\partial y}p_{0,1}^{(\mu)}(t;x,y).
\end{align*}
Finally, (\ref{eq:acp}) gives us
\begin{align*}
q^{(\mu)}_{0,1}(t,x,0)&=\lim_{y\rightarrow0}y^{2\mu+1}\frac{\partial}{\partial y}\left((xy)^{-2\mu}p^{(-\mu)}_{1}(t,x,y)\right)\\
&=\lim_{y\rightarrow0}x^{-2\mu}\left(-2\mu\, p^{(-\mu)}_{1}(t,x,y)+y\frac{\partial}{\partial y}p^{(-\mu)}_{1}(t,x,y)\right)\\
&=-2\mu x^{-2\mu}\, p^{(-\mu)}_{1}(t,x,0),
\end{align*}
which ends the proof.
\end{proof}
An immediate corollary of the theorem is the series formulae for the function $q^{(\mu)}_{0,1}(t,x,y)$.
\begin{cor}For $\mu<0$ and $x\in(0,1)$ we have
\begin{align}\label{eq:q011series}
q^{(\mu)}_{0,1}(t,x,1)&=x^{-\mu}\sum_{n=1}^\infty j_{-\mu,n}\frac{J_{-\mu}(j_{-\mu,n}x)}{J_{-\mu+1}(j_{-\mu,n})}  e^{-j_{-\mu,n}^2t/2},\\\label{eq:q010series}
q^{(\mu)}_{0,1}(t,x,0)&=\frac{2 x^{-\mu}}{\Gamma(-\mu)}\sum_{n=1}^\infty (j_{-\mu,n})^{-\mu}\frac{J_{-\mu}(j_{-\mu,n}x)}{\left|J_{-\mu+1}(j_{-\mu,n})\right|^2}  e^{-j_{-\mu,n}^2t/2}.
\end{align}
\end{cor}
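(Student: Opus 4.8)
The plan is to derive the two series directly from Theorem~\ref{thm:qasp} by combining the known Fourier--Bessel expansion \eqref{eq:p01series} for $p_1^{(\mu)}$ with the relation \eqref{eq:acp} linking $p_{0,1}^{(\mu)}$ to $p_1^{(-\mu)}$, and then performing the indicated $y$-derivatives term by term. First I would write, using \eqref{eq:acp} and \eqref{eq:p01series} with $-\mu>0$ in place of $\mu$,
\begin{equation*}
p_{0,1}^{(\mu)}(t,x,y)=(xy)^{-2\mu}p_1^{(-\mu)}(t,x,y)=(xy)^{-2\mu}(xy)^{\mu}\sum_{n=1}^\infty \frac{J_{-\mu}(j_{-\mu,n}x)J_{-\mu}(j_{-\mu,n}y)}{\left|J_{-\mu+1}(j_{-\mu,n})\right|^2}e^{-j_{-\mu,n}^2t/2},
\end{equation*}
so that $p_{0,1}^{(\mu)}(t,x,y)=(xy)^{-\mu}\sum_{n}\frac{J_{-\mu}(j_{-\mu,n}x)J_{-\mu}(j_{-\mu,n}y)}{|J_{-\mu+1}(j_{-\mu,n})|^2}e^{-j_{-\mu,n}^2t/2}$.

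For \eqref{eq:q011series} I would apply \eqref{eq:qasp011}, i.e.\ $q_{0,1}^{(\mu)}(t,x,1)=-\partial_y p_{0,1}^{(\mu)}(t,x,1)$. Differentiating the product $y^{-\mu}J_{-\mu}(j_{-\mu,n}y)$ and evaluating at $y=1$, the term $-\mu y^{-\mu-1}J_{-\mu}(j_{-\mu,n}y)$ vanishes because $J_{-\mu}(j_{-\mu,n})=0$, leaving only $-j_{-\mu,n}J'_{-\mu}(j_{-\mu,n})$; using the standard identity $J'_\nu(j_{\nu,n})=-J_{\nu+1}(j_{\nu,n})$ converts this to $j_{-\mu,n}J_{-\mu+1}(j_{-\mu,n})$, and dividing by $|J_{-\mu+1}(j_{-\mu,n})|^2$ yields exactly the coefficient $j_{-\mu,n}/J_{-\mu+1}(j_{-\mu,n})$ appearing in \eqref{eq:q011series}. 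For \eqref{eq:q010series} I would instead use \eqref{eq:qasp010}, $q_{0,1}^{(\mu)}(t,x,0)=-2\mu x^{-2\mu}p_{0,1}^{(-\mu)}(t,x,0)$; since $-\mu>0$, the point $0$ is reflecting for $R^{(-\mu)}$ and I can use the explicit formula \eqref{eq:p0} together with the series \eqref{eq:p01series} after taking $y\to0$ via the asymptotic \eqref{eq:Jat0}, which gives $y^{-(-\mu)}J_{-\mu}(j_{-\mu,n}y)\to (j_{-\mu,n}/2)^{-\mu}/\Gamma(-\mu+1)$. Multiplying through by $-2\mu x^{-2\mu}$ and using $-2\mu/\Gamma(-\mu+1)=2/\Gamma(-\mu)$ (from $\Gamma(-\mu+1)=-\mu\,\Gamma(-\mu)$) produces the stated constant $2/\Gamma(-\mu)$ and the exponent $-\mu$ on $x$ and on $j_{-\mu,n}$.

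The main obstacle is the justification of term-by-term differentiation (for \eqref{eq:q011series}) and of the term-by-term limit $y\to0$ (for \eqref{eq:q010series}) in these oscillating Fourier--Bessel series. This is, however, precisely the analytic point already dispatched in the proof of Theorem~\ref{thm:qasp}: invoking \eqref{eq:j}, \eqref{eq:Jj} and \eqref{eq:estJ} one checks that the differentiated series converges uniformly for $t$ bounded away from zero and for $y$ in a neighbourhood of the relevant endpoint, so the interchange of limit/derivative and summation is legitimate; I would simply cite that argument. Everything else is the elementary Bessel-function bookkeeping described above, so the corollary follows at once.
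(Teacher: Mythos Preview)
Your argument is correct, and for \eqref{eq:q010series} it is exactly the paper's: invoke \eqref{eq:qasp010}, pass to the limit $y\to0$ term by term in the series \eqref{eq:p01series} for $p_1^{(-\mu)}$ using \eqref{eq:Jat0}, and simplify the constant via $\Gamma(-\mu+1)=-\mu\,\Gamma(-\mu)$.

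For \eqref{eq:q011series}, however, the paper takes a shorter route than your termwise differentiation. It simply combines the absolute-continuity relation \eqref{eq:acq}, namely $q_{0,1}^{(\mu)}(t,x,1)=x^{-2\mu}q_1^{(-\mu)}(t,x)$, with Kent's series \eqref{eq:q01series} applied at index $-\mu$; the prefactor $x^{-2\mu}\cdot x^{\mu}=x^{-\mu}$ drops out immediately and no Bessel-function identity is needed. Your approach via \eqref{eq:qasp011} and term-by-term differentiation of \eqref{eq:p01series} is also valid and has the virtue of being self-contained (it does not presuppose the formula \eqref{eq:q01series}), but it costs the extra ingredient $J'_{\nu}(j_{\nu,n})=-J_{\nu+1}(j_{\nu,n})$ together with a justification of differentiating under the sum, both of which the paper sidesteps by appealing to \eqref{eq:q01series} directly.
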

\begin{proof}
The formula (\ref{eq:q011series}) is a consequence of (\ref{eq:acq}) and (\ref{eq:q01series}). By (\ref{eq:qasp010}) we have to calculate a limit of (\ref{eq:p01series}) as $y$ tends to zero. Since the series is uniformly convergent for every fixed $x\in(0,1)$ and $t>0$ (cf. (\ref{eq:estJ})), usage of (\ref{eq:Jat0}) leads to (\ref{eq:q010series}).    
\end{proof}
Since $p^{(-\mu)}_{1}(t,x,1)=0$, the formula (\ref{eq:estp1}) allows us to estimate the derivative $(\partial/\partial y) p_1^{(\mu)}(t;x,1)$. Combining this with Theorem \ref{thm:qasp} we may obtain estimates of densities $q^{(\mu)}_{1}(t,x)$ and $q^{(\mu)}_{0,1}(t,x,y)$.
\begin{thm}
For $\mu>-1$  we have
$$q_{1}^{(\mu)}(t,x)\appmu \frac{(1-x)(1+t)^{\mu+2}}{(x+t)^{\mu+1/2} t^{3/2}}\exp\(-\frac{(1-x)^2}{2t}-\frac12j^2_{\mu,1}t\).$$
For $\mu<0$  we have
\begin{align*}
q_{0,1}^{(\mu)}(x,t,1)&\appmu \frac{x^{-2\mu}(1-x)}{(x+t)^{-\mu+1/2}}\frac{(1+t)^{-\mu+2}}{ t^{3/2}}\exp\(-\frac{(1-x)^2}{2t}-\frac12j^2_{-\mu,1}t\),\\[10pt]
q_{0,1}^{(\mu)}(x,t,0)&\appmu \frac{x^{-2\mu}(1-x)}{1-x+t}\frac{(1+t)^{-\mu+2}}{t^{-\mu+1}}\exp\(-\frac{x^2}{2t}-\frac12j^2_{-\mu,1}t\).
\end{align*}
\end{thm}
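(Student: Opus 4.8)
The plan is to deduce all three estimates from the representations of Theorem~\ref{thm:qasp}, using as input only the estimate (\ref{eq:estp1}) of the killed kernel $p_1^{(\mu)}$ and the elementary bounds (\ref{est:dens}), (\ref{eq:p0}) for the free kernel. The observation that makes this work is that $1$ is a killing level, so $p_1^{(\mu)}(t,x,1)=0$, and hence by (\ref{eq:qasp01})
$$
q_1^{(\mu)}(t,x)=-\frac{\partial}{\partial y}p_1^{(\mu)}(t,x,1)=\lim_{y\to1^-}\frac{p_1^{(\mu)}(t,x,y)}{1-y}.
$$
For the two $q_{0,1}$ quantities one additionally uses that, by (\ref{eq:acq}), the bound for $q_{0,1}^{(\mu)}(t,x,1)$ is just $x^{-2\mu}$ times that for $q_1^{(-\mu)}(t,x)$, and that, by (\ref{eq:qasp010}), $q_{0,1}^{(\mu)}(t,x,0)=-2\mu\,x^{-2\mu}\,p_1^{(-\mu)}(t,x,0)$ is a boundary value of the killed kernel itself.

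For the first estimate I would substitute (\ref{eq:estp1}) into the difference quotient above. Since the comparison constants in (\ref{eq:estp1}) do not depend on $t,x,y$, one may divide the two-sided inequality by $1-y$ and pass to the limit $y\to1^-$, provided the resulting majorant and minorant converge. They do: for $y$ close enough to $1$ one has $1\wedge\frac{(1-x)(1-y)}{t}=\frac{(1-x)(1-y)}{t}$, so that $\frac1{1-y}\(1\wedge\frac{(1-x)(1-y)}{t}\)\to\frac{1-x}{t}$, while $p^{(\mu)}(t,x,y)\to p^{(\mu)}(t,x,1)$ by continuity. This gives
$$
q_1^{(\mu)}(t,x)\appmu\frac{(1-x)(1+t)^{\mu+2}}{t}\;e^{-j^2_{\mu,1}t/2}\;p^{(\mu)}(t,x,1),
$$
and inserting (\ref{est:dens}) for $p^{(\mu)}(t,x,1)$ and collecting the powers of $t$ yields the first assertion.

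For $q_{0,1}^{(\mu)}(t,x,1)$, $\mu<0$, it then suffices to invoke (\ref{eq:acq}) and apply the part just proved with $\mu$ replaced by $-\mu>-1$; multiplication by $x^{-2\mu}$ produces exactly the second estimate. For $q_{0,1}^{(\mu)}(t,x,0)$ one uses (\ref{eq:qasp010}), which reduces the problem to estimating $p_1^{(-\mu)}(t,x,0)$; I would apply (\ref{eq:estp1}) with index $-\mu$ at the boundary point $y=0$ (the estimate extending there by continuity of both sides) and replace the free kernel $p^{(-\mu)}(t,x,0)$ by its exact value from (\ref{eq:p0}). The elementary identity $1\wedge\frac{1-x}{t}\approx\frac{1-x}{1-x+t}$ then puts the prefactor in the stated form.

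The one genuinely delicate point is the passage from a two-sided estimate of $p_1^{(\mu)}$ to a two-sided estimate of its boundary $y$-derivative: a pointwise two-sided bound on a function does not in general control its derivative, and what rescues the argument is precisely that $p_1^{(\mu)}(t,x,1)=0$, so the derivative is a limit of difference quotients, which a pointwise bound \emph{does} control, together with the uniformity of the constants in (\ref{eq:estp1}) and the existence of the limit of the majorant and minorant divided by $1-y$; differentiating (\ref{eq:estp1}) directly in $y$ would be illegitimate. Everything else---verifying that (\ref{eq:estp1}) extends up to $y=0$ and keeping the elementary prefactors under control when (\ref{est:dens}) and (\ref{eq:p0}) are substituted so that sharpness is preserved---is routine bookkeeping.
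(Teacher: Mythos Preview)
Your proposal is correct and follows essentially the same route as the paper: write $q_1^{(\mu)}(t,x)=\lim_{y\to1^-}p_1^{(\mu)}(t,x,y)/(1-y)$ using $p_1^{(\mu)}(t,x,1)=0$, pass the uniform two-sided bound (\ref{eq:estp1}) through this limit, reduce the second assertion to the first via (\ref{eq:acq}), and obtain the third directly from (\ref{eq:qasp010}) together with (\ref{eq:estp1}) at $y=0$ and (\ref{eq:p0}). Your explicit remark that the vanishing of $p_1^{(\mu)}$ at $y=1$ and the uniformity of the constants are what make the passage from a pointwise estimate to an estimate on the boundary derivative legitimate is exactly the point the paper relies on but does not spell out.
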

\begin{proof}
The last assertion of the theorem follows directly from (\ref{eq:qasp010}) and (\ref{eq:estp1}). Moreover, since (\ref{eq:acq}) holds, we need only to prove the first assertion.
 We get from the previous theorem that
\begin{align*}
q_1^{(\mu)}(t,x,1)=&-\frac{\partial}{\partial y}p_{1}^{(\mu)}(t;x,1)=\lim_{y\nearrow1}\frac{p_{1}^{(\mu)}(t;x,y)}{1-y}.
\end{align*}
 We apply (\ref{eq:estp1}) and, by
$$\lim_{y\rightarrow1}\frac{1\wedge\frac{(1-x)(1-y)}{t}}{1-y}=\frac1t,$$
the proof is complete.
\end{proof}

Let $B(t)$ be the $n$-dimensional Brownian motion starting from $x\in\R^n$. By $q^n(t,x)$ we denote the density function of the exit time of Brownian motion from the unit ball $B(0,1)$ centered at the origin. Since $|B(t)|$ is a Bessel process with index $n/2-1$ (with reflecting condition at $0$ for $n=1$), we have $q^n(t,x)=q_1^{(n/2-1)}(t,x)$ and, consequently,
\begin{cor} For $x\in B(0,1)$ we have
$$q^n(t,x)\stackrel{n}{\approx} \frac{(1-|x|)(1+t)^{n/2+1}}{(|x|+t)^{(n-1)/2} t^{3/2}}\exp\(-\frac{(1-|x|)^2}{2t}-\frac12j^2_{n/2-1,1}t\).$$
\end{cor}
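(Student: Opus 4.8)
The plan is to obtain the corollary as a direct specialization of the preceding theorem to the index $\mu=n/2-1$, together with the standard identification of the radial part of Brownian motion. First I would recall that if $B(t)$ is an $n$-dimensional Brownian motion started at $x$, then the radial process $|B(t)|$ is, in law, the Bessel process $R^{(\mu)}(t)$ with index $\mu=n/2-1$ started at $|x|$: for $n\geq 2$ the origin is polar and never visited, while for $n=1$ the process $|B(t)|$ is one-dimensional Brownian motion reflected at $0$, which matches exactly the reflecting convention at $0$ adopted in this paper for indices $\mu\in(-1,0)$. In every dimension $n\geq 1$ we have $\mu=n/2-1>-1$, so the hypothesis $\mu>-1$ of the theorem is satisfied.

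Next I would use rotational symmetry of the unit ball $B(0,1)$: the exit time of $B$ from $B(0,1)$ depends on the starting point only through $|x|$, and it coincides with the first hitting time of the level $1$ by the radial process $|B(t)|$, i.e. with $T_1$ for the associated Bessel process. Consequently $q^n(t,x)=q_1^{(n/2-1)}(t,|x|)$ for all $x\in B(0,1)$ and $t>0$, as already noted in the text preceding the corollary.

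Finally I would substitute $\mu=n/2-1$ into the two-sided estimate for $q_1^{(\mu)}(t,x)$ from the previous theorem, using $\mu+2=n/2+1$ and $\mu+1/2=(n-1)/2$, and replace the scalar argument $x$ by $|x|$. Since the implied comparison constants in that theorem depend only on $\mu$, here they depend only on $n$, which is precisely what the notation $\stackrel{n}{\approx}$ records, and one reads off the claimed estimate
$$q^n(t,x)\stackrel{n}{\approx} \frac{(1-|x|)(1+t)^{n/2+1}}{(|x|+t)^{(n-1)/2} t^{3/2}}\exp\(-\frac{(1-|x|)^2}{2t}-\frac12j^2_{n/2-1,1}t\).$$
There is essentially no obstacle beyond this bookkeeping; the only point deserving a word of care is the behaviour at the origin when $n=1$, which is exactly covered by the reflecting boundary convention, and the elementary check that $n/2-1$ lies in the admissible range $(-1,\infty)$ for every $n\geq 1$.
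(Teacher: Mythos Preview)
Your proposal is correct and follows exactly the paper's own argument: the paper simply notes that $|B(t)|$ is a Bessel process with index $n/2-1$ (with reflecting condition at $0$ when $n=1$), identifies $q^n(t,x)=q_1^{(n/2-1)}(t,|x|)$, and reads off the estimate from the preceding theorem. Your additional remarks about checking $\mu=n/2-1>-1$ and the reflecting convention at $n=1$ are the only details worth mentioning, and they match the paper's brief justification.
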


\section{Asymptotics}
In this section we provide precise asymptotic behaviour of  the functions $q_{1}^{(\mu)}(t,x)$ and $q_{0,1}^{(\mu)}(t,x,y)$ for small $t$. In the Theorem \ref{thm:asymp1} we describe this behaviour by means of simpler objects, i.e. the standard transition density $p^{(\mu)}(t,x,0)$ of the Bessel process reflected at zero,  and the exit time density $q_{x/4,1}^{(-1/2)}(t,x,y)$ of the one-dimensional Brownian motion from $(x/4,1)$.
\begin{thm}\label{thm:asymp1}
For $\mu>-1$ we have
\begin{align}\label{eq:auxas1}
q_{1}^{(\mu)}(t,x)&=2\frac{1-x}{t}\,p^{(\mu)}(t,x,1)\(1+O_\mu\(\frac{t}{1-x}\)\),\\\label{eq:auxas2}
q_{1}^{(\mu)}(t,x)&=\frac{q_{x/4,1}^{(-1/2)}(t,x)}{x^{\mu+1/2}}(1+O_\mu(t)),\ \ \ x\in(1/2,1),
\end{align}
and for $\mu<0$ we have
\begin{align}\label{eq:auxas3}
q_{0,1}^{(\mu)}(t,x,0)&=-2\mu x^{-2\mu}p^{(-\mu)}(t,x,0)\(1+O_\mu\(e^{-2(1-x)/t}\)\),\\\label{eq:auxas4}
q_{0,1}^{(\mu)}(t,x,0)&=-4\mu x^{-2\mu}\frac{1-x}{t}p^{(-\mu)}(t,x,0)\\\nonumber
&\ \ \ \times\(1+O_\mu\(\frac{1-x}t\)\)\(1+O_\mu(t)\),
\end{align}
where $t<t_0$ for some $t_0>0$.
\end{thm}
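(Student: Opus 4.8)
The plan is to reduce, via Theorem~\ref{thm:qasp}, each of the four identities to the behaviour of a killed density near the boundary point $1$: since $p_1^{(\mu)}(t,x,1)=0$ one has $q_1^{(\mu)}(t,x)=\lim_{y\to1^-}p_1^{(\mu)}(t,x,y)/(1-y)$, while $q_{0,1}^{(\mu)}(t,x,0)=-2\mu x^{-2\mu}p_1^{(-\mu)}(t,x,0)$ and, by symmetry, $p_1^{(-\mu)}(t,x,0)=p_1^{(-\mu)}(t,0,x)$. The key device is an exact reflection identity obtained by subtracting two copies of the Hunt formula \eqref{eq:Hunt}: since $p_1^{(\mu)}(t,x,z)=0$ for $z>1$, \eqref{eq:Hunt} at $z=2-y$ reads $p^{(\mu)}(t,x,2-y)=\E^x[t>T_1;p^{(\mu)}(t-T_1,1,2-y)]$ for $y<1$, so subtracting it from \eqref{eq:Hunt} at $y$ gives
$$p_1^{(\mu)}(t,x,y)=p^{(\mu)}(t,x,y)-p^{(\mu)}(t,x,2-y)+\E^x\bigl[t>T_1;\,p^{(\mu)}(t-T_1,1,2-y)-p^{(\mu)}(t-T_1,1,y)\bigr].$$

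To prove \eqref{eq:auxas1} I would estimate the error term: for $y$ near $1$ and $u<t_0(\mu)$, the smoothness of $z\mapsto p^{(\mu)}(u,1,z)$ near $1$ together with $\partial_z\log p^{(\mu)}(u,1,1)=-\mu-\tfrac1u+\tfrac1u I_\mu'(1/u)/I_\mu(1/u)=O_\mu(1)$ (by \eqref{eq:asympIinfty}) gives $|p^{(\mu)}(u,1,2-y)-p^{(\mu)}(u,1,y)|\le c_\mu(1-y)\,p^{(\mu)}(u,1,y)$, whence the expectation is at most $c_\mu(1-y)\bigl(p^{(\mu)}(t,x,y)-p_1^{(\mu)}(t,x,y)\bigr)\le c_\mu(1-y)\,p^{(\mu)}(t,x,y)$. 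Dividing the displayed identity by $1-y$ and letting $y\to1$, the two image pieces contribute equally and the error contributes only $O_\mu(p^{(\mu)}(t,x,1))$, so $q_1^{(\mu)}(t,x)=-2\partial_y p^{(\mu)}(t,x,1)+O_\mu(p^{(\mu)}(t,x,1))$. By \eqref{eq:p}, $\partial_y\log p^{(\mu)}(t,x,1)=-\mu-\tfrac1t+\tfrac xt I_\mu'(x/t)/I_\mu(x/t)$, and \eqref{eq:asympIinfty} (i.e. $I_\mu'(z)/I_\mu(z)=1+O_\mu(1/z)$) makes this $\tfrac{x-1}{t}+O_\mu(1)=\tfrac{x-1}{t}\bigl(1+O_\mu(\tfrac{t}{1-x})\bigr)$; hence $-2\partial_y p^{(\mu)}(t,x,1)=\tfrac{2(1-x)}{t}p^{(\mu)}(t,x,1)\bigl(1+O_\mu(\tfrac{t}{1-x})\bigr)$, and since the remainder $O_\mu(p^{(\mu)}(t,x,1))$ equals $O_\mu(\tfrac{t}{1-x})$ times the leading term, \eqref{eq:auxas1} follows.

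For \eqref{eq:auxas4} I would run the same argument with the two spatial variables exchanged, applying the reflection identity to $p_1^{(-\mu)}(t,0,x)$ (subtracting \eqref{eq:Hunt} at the endpoints $x$ and $2-x$, both started from $0$): this gives $p_1^{(-\mu)}(t,x,0)=p^{(-\mu)}(t,x,0)-p^{(-\mu)}(t,2-x,0)+(\text{error})$ with error $\le c_\mu(1-x)p^{(-\mu)}(t,x,0)$ for $x$ near $1$; then \eqref{eq:p0}, i.e. $p^{(-\mu)}(t,x,0)=\tfrac{e^{-x^2/2t}}{(2t)^{-\mu+1}\Gamma(-\mu+1)}$ with $\partial_x\log p^{(-\mu)}(t,x,0)=-x/t$, gives $p^{(-\mu)}(t,x,0)-p^{(-\mu)}(t,2-x,0)=\tfrac{2(1-x)}{t}p^{(-\mu)}(t,x,0)\bigl(1+O_\mu(\tfrac{1-x}{t})\bigr)$, and $q_{0,1}^{(\mu)}(t,x,0)=-2\mu x^{-2\mu}p_1^{(-\mu)}(t,x,0)$ yields \eqref{eq:auxas4}. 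For \eqref{eq:auxas3} (now $x$ bounded away from $1$ relative to $t$) I would instead use the plain \eqref{eq:Hunt} at $y=0$: $p^{(-\mu)}(t,x,0)-p_1^{(-\mu)}(t,x,0)=\int_0^t q_1^{(-\mu)}(s,x)p^{(-\mu)}(t-s,1,0)\,ds$; bounding $q_1^{(-\mu)}(s,x)$ by the two-sided estimate for $q_1^{(-\mu)}$ established above and $p^{(-\mu)}(t-s,1,0)$ by \eqref{eq:p0}, the exponential part of the integrand is $e^{-(1-x)^2/2s-1/2(t-s)}$, and $\min_{s\in(0,t)}\bigl(\tfrac{(1-x)^2}{2s}+\tfrac1{2(t-s)}\bigr)=\tfrac{(2-x)^2}{2t}=\tfrac{x^2}{2t}+\tfrac{2(1-x)}{t}$, so a Laplace estimate bounds the correction by $c_\mu e^{-2(1-x)/t}p^{(-\mu)}(t,x,0)$, which is \eqref{eq:auxas3}. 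Finally \eqref{eq:auxas2} comes from \eqref{eq:ac} with $\nu=-1/2$: on $\{R^{(-1/2)}(T_{x/4,1})=1\}$ the process stays in $(x/4,1)$, so for $x\in(1/2,1)$ the exponential functional obeys $\int_0^{T_{x/4,1}}[R^{(-1/2)}(s)]^{-2}ds\le 16T_{x/4,1}/x^2\le 64t$, whence $q_{x/4,1}^{(\mu)}(t,x,1)=x^{-\mu-1/2}q_{x/4,1}^{(-1/2)}(t,x,1)(1+O_\mu(t))$; and passing between $q_1^{(\mu)}(t,x)$ and $q_{x/4,1}^{(\mu)}(t,x,1)$, and between $q_{x/4,1}^{(-1/2)}(t,x,1)$ and $q_1^{(-1/2)}(t,x)$, costs only the probability of touching $x/4$ before exiting at $1$, which is $e^{-c(x)/t}$ with $c(x)$ bounded below by a positive constant, hence absorbed into $O_\mu(t)$.

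The step I expect to be the main obstacle is obtaining the error bounds uniformly: for \eqref{eq:auxas1} and \eqref{eq:auxas4} one needs $|p^{(\mu)}(u,1,2-y)-p^{(\mu)}(u,1,y)|\le c_\mu(1-y)p^{(\mu)}(u,1,y)$ uniformly for $u\in(0,t_0)$ and $y$ in a fixed neighbourhood of $1$, which requires careful use of the asymptotics of $I_\mu$ at both $0$ and $\infty$ and is exactly where the techniques of \cite{MSZ} enter; and for \eqref{eq:auxas3} the Laplace estimate must be made uniform in $x\in(0,1)$ (its saddle $s^\ast=(1-x)t/(2-x)$ pinches the boundary as $x\to1$, but in that range one uses \eqref{eq:auxas4} instead). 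Everything else is bookkeeping with the elementary expansions of $p^{(\mu)}$ from \eqref{eq:p}, \eqref{eq:p0} and of $I_\mu'/I_\mu$.
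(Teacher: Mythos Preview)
Your plan is sound and largely coincides with the paper's: both derive the same reflection identity and use it for \eqref{eq:auxas1} and \eqref{eq:auxas4} by controlling the remainder $p^{(\mu)}(u,1,2-y)-p^{(\mu)}(u,1,y)$ and then passing to the limit $y\to1$. The paper's tool for that control, however, is not a log-derivative bound but the ratio estimate of Lemma~\ref{lem:I/I}: it gives $(y/(2-y))^{2}-1<p^{(\mu)}(u,1,y)/p^{(\mu)}(u,1,2-y)-1<((2-y)/y)^{2\mu+4}-1$ uniformly in $u>0$, and more generally the two-sided bound $(y/(2-y))^{2\mu+4}e^{-2(1-x)(1-y)/t}<p^{(\mu)}(t,x,2-y)/p^{(\mu)}(t,x,y)<((2-y)/y)^{2}e^{-2(1-x)(1-y)/t}$. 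This is cleaner than piecing together the behaviour of $I_\mu'/I_\mu$ at $0$ and at $\infty$, and it also delivers \eqref{eq:auxas3} directly: specialising the reflection identity to starting point $0$, one has $p^{(-\mu)}(t,0,2-x)/p^{(-\mu)}(t,0,x)=e^{-2(1-x)/t}$ exactly by \eqref{eq:p0}, so $p_1^{(-\mu)}(t,0,x)=p^{(-\mu)}(t,0,x)\bigl(1+O_\mu(e^{-2(1-x)/t})\bigr)$ with no Laplace argument needed. For \eqref{eq:auxas2} the paper instead quotes the comparison $p_1^{(\mu)}(t,x,y)=(1+O_\mu(e^{-1/64t}))p_{x/4,1}^{(\mu)}(t,x,y)$ from \cite{MSZ} and applies \eqref{eq:ac} to the transition density before differentiating at $y=1$; your direct argument at the level of exit-time densities is a legitimate shortcut that avoids citing \cite{MSZ}. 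One caveat on your error bound: the pointwise value $\partial_z\log p^{(\mu)}(u,1,z)\big|_{z=1}=O_\mu(1)$ does not by itself justify the uniform estimate, since on $(y,2-y)$ that derivative equals $(1-z)/u+O_\mu(1)$ and is unbounded as $u\to0$; what makes $\int_y^{2-y}\partial_z\log p^{(\mu)}(u,1,z)\,dz=O_\mu(1-y)$ is the odd symmetry of $(1-z)/u$ about $z=1$, which is exactly the cancellation that Lemma~\ref{lem:I/I} packages.
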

\begin{proof}
Using Strong Markov property we obtain for $x\in(0,1)$, $\mu>-1$ and $A\subset(1,\infty)$
\begin{align*}
\E^x\[R^{\mu}(t)\in A\]&=\E^x\[T_{1}<t;R^{\mu}(t)\in A\]\\
&=\E^x\[T_{1}<t;\E^1\[R^{\mu}(t-s)\in A\]_{s=T_{1}}\],
\end{align*}
which can be expressed in the following way
$$\int_Ap^{(\mu)}(t,x,y)m(dy)=\int_0^tq_1^{(\mu)}(s,x)\int_Ap^{(\mu)}(t-s,1,y)m(dy)ds.$$
By the Fubini theorem, we deduce
$$p^{(\mu)}(t,x,y)=\int_0^tq_1^{(\mu)}(s,x)p^{(\mu)}(t-s,1,y)ds, \ \ \ x\in(0,1), y>1.$$
This let us rewrite (\ref{eq:Hunt}) into the form
\begin{align}\label{eq:rp}
p_1^{(\mu)}&(t,x,y)=p^{(\mu)}(t,x,y)-p^{(\mu)}(t,x,2-y)\\\nonumber
-&\int_0^tq_1^{(\mu)}(s,x,1)\[p^{(\mu)}(t-s,1,y)-p^{(\mu)}(t-s,1,2-y)\]ds.
\end{align}
This equality may be understood as a kind of  imitation of the reflection principle. The appearing integral plays then a role of error term and would vanish if $p^{(\mu)}(t,x,y)$ was a probability density function of Brownian motion. Furthermore, by (\ref{eq:p}) we have for $u>0$ and $x<1<y$
\begin{align}\label{eq:p/p}
\frac{p^{(\mu)}(u,x,2-y)}{p^{(\mu)}(u,x,y)}=\(\frac y{2-y}\)^{\mu}\frac{I_\mu\(x(2-y)/u\)}{I_\mu\(xy/u\)}e^{-2(1-y)/u}.
\end{align}
Hence, by Lemma \ref{lem:I/I}, we get
$$\(\frac y{2-y}\)^{2}-1<\frac{p^{(\mu)}(u,1,y)}{p^{(\mu)}(u,1,2-y)}-1<\(\frac {2-y}y\)^{2\mu+4}-1.$$
We may write it for $y>1/2$ as
$$\frac{p^{(\mu)}(u,1,y)}{p^{(\mu)}(u,1,2-y)}-1=O(1-y),$$
which gives us
\begin{align*}
\int_0^t&q^{(\mu)}(s,x,1)\(p^{(\mu)}(t-s,1,y)-p^{(\mu)}(t-s,1,2-y)\)ds\\
&=O(1-y)\int_0^tq^{(\mu)}(s,x,1)p^{(\mu)}(t-s,1,2-y)ds\\
&=O(1-y)p^{(\mu)}(t,x,2-y).
\end{align*}
This allows us to transform  (\ref{eq:rp}) to
\begin{align}\nonumber
p_1^{(\mu)}&(t,x,y)\\\label{eq:rp1}
&=p^{(\mu)}(t,x,y)\[1-\(1+O(1-y)\)\frac{p^{(\mu)}(t,x,2-y)}{p^{(\mu)}(t,x,y)}\]\\\label{eq:rp2}
&=p^{(\mu)}(t,x,y)\(1+O(1-y)\)\[1-\frac{p^{(\mu)}(t,x,2-y)}{p^{(\mu)}(t,x,y)}+O(1-y)\].
\end{align}
Using (\ref{eq:p/p}) and Lemma \ref{lem:I/I} we get
\begin{align*}
\(\frac{y}{2-y}\)^{2\mu+4}e^{-2(1-x)(1-y)/t}&<\frac{p^{(\mu)}(t,x,2-y)}{p^{(\mu)}(t,x,y)}\\
&<\(\frac{2-y}{y}\)^{2}e^{-2(1-x)(1-y)/t}.
\end{align*}
This, together with (\ref{eq:rp1}), (\ref{eq:qasp010}) and symmetry of $p^{(\mu)}_{1}(t,x,y)$,  proves the formula (\ref{eq:auxas3}). Furthermore, the above-given inequalities  give us
\begin{align*}
1-&\frac{p^{(\mu)}(t,x,2-y)}{p^{(\mu)}(t,x,y)}>1-\(\frac{2-y}{y}\)^{2}e^{-2(1-x)(1-y)/t}\\
&=1-e^{-2(1-x)(1-y)/t}+\(1-\(\frac{2-y}{y}\)^{2}\)e^{-2(1-x)(1-y)/t}\\
&=2\frac{(1-x)(1-y)}t+O_\mu\(\(\frac{(1-y)(1-x)}{t}\)^2\)+O_\mu(1-y),
\end{align*}
and similarly
\begin{align*}
1-&\frac{p^{(\mu)}(t,x,2-y)}{p^{(\mu)}(t,x,y)}<1-\(\frac{y}{2-y}\)^{2\mu+4}e^{-2(1-x)(1-y)/t}\\
&=2\frac{(1-x)(1-y)}t+O_\mu\(\(\frac{(1-y)(1-x)}{t}\)^2\)+O_\mu(1-y),
\end{align*}
which can be generally written as 
\begin{align*}
1-&\frac{p^{(\mu)}(t,x,2-y)}{p^{(\mu)}(t,x,y)}\\
&=2\frac{(1-x)(1-y)}t+O_\mu\(\(\frac{(1-y)(1-x)}{t}\)^2+1-y\).
\end{align*}
Combining this with (\ref{eq:rp2}) we arrive at
\begin{align*}
p_1^{(\mu)}&(t,x,y)=\,p^{(\mu)}(t,x,y)\(1+O_\mu(1-y)\)\\
&\,\times\[2\frac{(1-x)(1-y)}t+O_\mu\(\(\frac{(1-y)(1-x)}{t}\)^2+1-y\)\].
\end{align*}
This equality implies formulae (\ref{eq:auxas1}) and (\ref{eq:auxas4}). Indeed, we have
\begin{align*}
q_1^{(\mu)}(t,x)&=\lim_{y\nearrow1}\frac{p_{1}^{(\mu)}(t,x,y)}{1-y}=\(2\frac{1-x}{t}+O_\mu(1)\)p^{(\mu)}(t,x,1).
\end{align*}
and
\begin{align*}
q_{0,1}^{(\mu)}(t,x,0)&=-2\mu x^{-2\mu}p_{1}^{(\mu)}(t,0,x)\\
&=-2\mu\(1+O\(1-x\)\)2\frac{1-x}{t}p^{(\mu)}(t,0,x)\\
&\ \ \ \ \times\(1+O_\mu\(\frac{1-x}t+t\)\)\\
&=-4\mu\frac{1-x}{t}p^{(\mu)}(t,x,0)\(1+O\(\frac{1-x}t\)\)\(1+O_\mu(t)\).
\end{align*}
It is showed in \cite{MSZ} that there exists $t_0>0$ such that 
\begin{equation}\label{eq:pasp1/4}p_{1}^{(\mu)}(t,x,y)=\(1+O_\mu\(e^{-1/64t}\)\)p_{x/4,1}^{(\mu)}(t,x,y),
\end{equation}
whenever $x,y\in(1/2,1)$ and $t<t_0$. By the absolute continuity (\ref{eq:ac}) of distributions of Bessel processes with different  indices we get
\begin{align*}
	\E^x&\left[t<T^{(\mu)}_{x/4,1}, R^{(\mu)}(t)\in A\right]\\
&=  \E^x\bigg[t<T^{(-1/2)}_{x/4,1}, R^{(-1/2)}(t)\in A;\\
&\ \ \ \ \ \ \ \ \ \left(\frac{R^{(-1/2)}(t)}{x}\right)^{\mu+1/2}\exp\left(-\frac{\mu^2-1/4}{2}\int_0^t \frac{ds}{[R^{(-1/2)}(s)]^2}\right)\bigg]\/.
\end{align*}
Since $R^{(-1/2)}(s)>1/8$ on $\{t<\tau^{-1/2}_{(x/4,1)}\}$, we have
$$\int_0^t \frac{ds}{[R^{(-1/2)}(s)]^2}<64t,$$
and consequently
\begin{align*}
\exp&\left(-\frac{\mu^2-1/4}{2}\int_0^t \frac{ds}{[R^{(-1/2)}(s)]^2}\right)\\
&=1+\[\exp\left(-\frac{\mu^2-1/4}{2}\int_0^t \frac{ds}{[R^{(-1/2)}(s)]^2}\right)-1\]\\
&=1+O_\mu(t).
\end{align*}
Combining this with (\ref{eq:pasp1/4})  we obtain 
$$p^{(\mu)}_{1}(t,x,y)=(1+O_\mu(t))\left(\frac{y}{x}\right)^{\mu+1/2}p^{(-1/2)}_{x/4,1}(t,x,y).$$
Thus, formulae  (\ref{eq:qasp01}) and  (\ref{eq:qasp011}), scaling property and the shift-invariance of one-dimensional Brownian motion give us
 \begin{align*}
q_{1}^{(\mu)}(t,x)&=(1+O(t))\frac{\partial}{\partial y}\[\left(\frac{y}{x}\right)^{\mu+1/2}p^{(-1/2)}_{x/4,1}(t,x,y)\]_{y=1}\\
&=\frac{(1+O(t))}{x^{\mu+1/2}}\[(\mu+1/2)p^{(-1/2)}_{x/4,1}(t,x,1)+\frac{\partial}{\partial y}p^{(-1/2)}_{x/4,1}(t,x,1)\]\\
&=\frac{(1+O(t))}{x^{\mu+1/2}}q_{x/4,1}^{(-1/2)}(t,x,1).
\end{align*}
This completes the proof.
\end{proof}
We are now prepared to express the asymptotic behaviour explicitly.

\begin{thm}For $\mu>-1$  we have
\begin{align}\label{eq:x/t->00}
q_{1}^{(\mu)}(t,x,1)&=\frac{1-x}{\sqrt{2\pi t^3}}\frac{e^{-(1-x)^2/2t}}{x^{\mu+1/2}}\(1+O_\mu\(\frac t{x}\)\),\\\label{eq:x/t->0}
q_{1}^{(\mu)}(t,x,1)&=\frac{1-x}{t^{\mu+1}}\frac{e^{-(1-x)^2/2t}}{2^\mu\Gamma(\mu+1)}\(1+O_\mu\(\(\frac {x}t\)^2\)\)\(1+O_\mu\(t\)\).
\end{align}
If $\mu<0$, then we have 
\begin{align}\label{eq:(1-x)/t->00}
q_{0,1}^{(\mu)}(t,x,0)&=\frac{2 x^{-2\mu}}{(2t)^{-\mu+1}\Gamma(-\mu)}e^{-x^2/2t}\(1+O_\mu\(e^{-2(1-x)/t}\)\),\\\label{eq:(1-x)/t->0}
q_{0,1}^{(\mu)}(t,x,0)&=\frac{8(1-x) x^{-2\mu}}{(2t)^{-\mu+2}\Gamma(-\mu)}e^{-x^2/2t}\(1+O_\mu\(\frac{1-x}t\)\)\(1+O_\mu(t)\),
\end{align}
where $t<t_0$ for some $t_0>0$.
\end{thm}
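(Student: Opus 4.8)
The plan is to obtain each of the four formulae by feeding an explicit expression for the relevant transition density into Theorem~\ref{thm:asymp1}; the analytic substance has already been settled there, so what is left is small-time bookkeeping. The two identities for $q_{0,1}^{(\mu)}(t,x,0)$ are the easiest. Since the transition density is symmetric with respect to the speed measure, $p^{(-\mu)}(t,x,0)=p^{(-\mu)}(t,0,x)$, and \eqref{eq:p0} applied with index $-\mu$ gives the \emph{exact} value $p^{(-\mu)}(t,x,0)=\big((2t)^{-\mu+1}\Gamma(1-\mu)\big)^{-1}e^{-x^2/2t}$. Substituting this into \eqref{eq:auxas3} and into \eqref{eq:auxas4}, and using $\Gamma(1-\mu)=-\mu\,\Gamma(-\mu)$ to swallow the prefactors $-2\mu$ and $-4\mu$ and to collect the powers of $2$ and $t$, reproduces \eqref{eq:(1-x)/t->00} and \eqref{eq:(1-x)/t->0}; the error terms are exactly those already carried by Theorem~\ref{thm:asymp1}, so nothing more is needed here.

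For $q_{1}^{(\mu)}(t,x,1)$ I would start from \eqref{eq:p}, i.e.\ $p^{(\mu)}(t,x,1)=\tfrac1{2t}x^{-\mu}e^{-(x^2+1)/2t}I_\mu(x/t)$, and split according to the size of $x/t$. When $x/t$ is large I would insert \eqref{eq:asympIinfty}, $I_\mu(x/t)=\sqrt{t/(2\pi x)}\,e^{x/t}(1+O_\mu(t/x))$; the exponential factors collapse, since $-(x^2+1)/2t+x/t=-(1-x)^2/2t$, giving $p^{(\mu)}(t,x,1)=\tfrac{x^{-\mu-1/2}}{2\sqrt{2\pi t}}\,e^{-(1-x)^2/2t}(1+O_\mu(t/x))$. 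For $x\le\tfrac12$ the distance $1-x$ is bounded below, so the error $O_\mu\!\big(t/(1-x)\big)$ in \eqref{eq:auxas1} is $O_\mu(t)\subseteq O_\mu(t/x)$, and \eqref{eq:auxas1} together with the last display gives \eqref{eq:x/t->00}. For $x\in(\tfrac12,1)$ that absorption is no longer legitimate, and there I would use \eqref{eq:auxas2}, which reduces matters to the small-$t$ behaviour of the one-dimensional Brownian exit density $q_{x/4,1}^{(-1/2)}(t,x,1)$: carrying the interval $(x/4,1)$ onto $(0,1)$ by the shift-invariance of Brownian motion and the scaling \eqref{eq:scalingq}, and then invoking the reflection formula \eqref{eq:p1/2}, the $k=0$ term equals $\tfrac{1-x}{\sqrt{2\pi t^3}}e^{-(1-x)^2/2t}$ while every other term carries in its exponent a quantity bounded below by a positive constant (the far endpoint $x/4$ stays at distance $\ge\tfrac38$ from $x$), so --- uniformly for $x$ in compact subsets of $(\tfrac12,1)$ --- the series equals its $k=0$ term times $1+O(e^{-c/t})$; with \eqref{eq:auxas2} this again yields \eqref{eq:x/t->00}, now with error $O_\mu(t)\subseteq O_\mu(t/x)$.

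It remains to treat the regime $x/t\to 0$, which, because $t<t_0<1$, forces $x$ small and hence $1-x$ bounded below. Here I would instead use \eqref{eq:asympI0}: $I_\mu(x/t)=\tfrac{(x/t)^\mu}{2^\mu\Gamma(\mu+1)}(1+O_\mu((x/t)^2))$, so that $x^{-\mu}I_\mu(x/t)=\tfrac{t^{-\mu}}{2^\mu\Gamma(\mu+1)}(1+O_\mu((x/t)^2))$ and $p^{(\mu)}(t,x,1)=\big((2t)^{\mu+1}\Gamma(\mu+1)\big)^{-1}e^{-(x^2+1)/2t}(1+O_\mu((x/t)^2))$; noting $x^2/2t=\tfrac12(x/t)^2t=O_\mu((x/t)^2)$, one collects the exponential and power factors, substitutes into \eqref{eq:auxas1} (whose error $O_\mu(t/(1-x))$ is now $O_\mu(t)$), and reads off the last formula. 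I expect the only genuinely non-mechanical points to be the regime split for \eqref{eq:x/t->00} --- choosing a threshold for $x$ so that precisely one of \eqref{eq:auxas1}, \eqref{eq:auxas2} applies with the advertised error --- and the extraction of the sharp small-$t$ asymptotics of $q_{x/4,1}^{(-1/2)}(t,x,1)$ from its reflection series together with the attendant uniformity in $x$; everything else is composition of the $O_\mu$-terms already furnished by Theorem~\ref{thm:asymp1} and by \eqref{eq:asympIinfty}, \eqref{eq:asympI0}.
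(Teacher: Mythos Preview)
Your plan coincides with the paper's proof: \eqref{eq:(1-x)/t->00}--\eqref{eq:(1-x)/t->0} come from inserting \eqref{eq:p0} into \eqref{eq:auxas3}--\eqref{eq:auxas4}, \eqref{eq:x/t->0} from \eqref{eq:auxas1} together with \eqref{eq:asympI0}, and \eqref{eq:x/t->00} via exactly the same split at $x=1/2$, using \eqref{eq:auxas1} with \eqref{eq:asympIinfty} for $x\le 1/2$ and \eqref{eq:auxas2} with the reflection series \eqref{eq:p1/2} for $x>1/2$. The uniformity as $x\to 1$ that you rightly flag as the one non-mechanical point is handled in the paper by pairing the $k$ and $-k$ terms of \eqref{eq:p1/2} and using $\sinh r<re^{r}$, which cancels the dangerous $1/(1-x)$ factor and yields a uniform $1+O(e^{-x/t})$ rather than merely a bound on compact subsets; you should do the same.
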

\begin{rem*}
Asymptotic behaviour of $q_{0,1}^{(\mu)}(t,x,1)$  follows from the above-given theorem and the relationship (\ref{eq:acq}).
\end{rem*}
\begin{proof}
To obtain (\ref{eq:(1-x)/t->00}) and (\ref{eq:(1-x)/t->0}), it is enough to apply (\ref{eq:p0}) to (\ref{eq:auxas3}) and (\ref{eq:auxas4}), respectively. Furthermore, if $x/t\rightarrow0$, then $t/(1-x)\approx t$. Hence, by (\ref{eq:auxas1}) and (\ref{eq:asympI0}), we get (\ref{eq:x/t->0}). What has left is to prove (\ref{eq:x/t->00}). Rewriting (\ref{eq:p1/2}) for $y=1$ we get
\begin{align*}
q^{(-1/2)}_{0,1}&(t,x,1)\\
=&\frac{1-x}{\sqrt{2\pi}t^{3/2}}\exp\({-\frac{(1-x)^2}{2t}}\)\[\sum_{k=-\infty}^{\infty}\exp\({-\frac{2k(k+1-x)}{t}}\)\right.\\
&\left.+\sum_{k=1}^{\infty}\frac{2k}{1-x}\(e^{-2k(k+1-x)/t}-e^{-2k(k-(1-x))/t}\)\]\\
=:&\frac{1-x}{\sqrt{2\pi}t^{3/2}}\exp\({-\frac{(1-x)^2}{2t}}\)\[S_1+S_2\].
\end{align*}
It is clear that $S_1=1+O_\mu(e^{-2x/t})$. By inequality $\sinh r<re^r$, $r>0$, we get
\begin{align*}
0<S_2&=\sum_{k=1}^{\infty}\frac{2k}{1-x}\(e^{-2k(k+1-x)/t}-e^{-2k(k-(1-x))/t}\)\\
&<\sum_{k=1}^{\infty}\frac{4k}{1-x}e^{-2k^2/t}\sinh\(2k(1-x)/t)\)\\
&<\frac{e^{-2x/t}}{t}\sum_{k=1}^{\infty}8k^2e^{-2k(k-1)/t}.
\end{align*}
Hence, for $1/2<x<1$ we obtain $S_2=O_\mu(e^{-x/t})$, and we arrive at
$$q^{(-1/2)}_{0,1}(t,x,1)=\frac{1-x}{\sqrt{2\pi}t^{3/2}}\(1+O_\mu(e^{-x/t})\).$$ 
Thus, using (\ref{eq:auxas2}),  scaling property and the shift-invariance of one-dimensional Brownian motion, we obtain
\begin{align*}
q_{1}^{(\mu)}(t,x)&=\frac{1-x}{\sqrt{2\pi}t^{3/2}}\frac{e^{-(1-x)^2/2t}}{x^{\mu+1/2}}\(1+O_\mu(e^{-x(1-x/4)/t})\)(1+O_\mu(t))\\
&=\frac{1-x}{\sqrt{2\pi}t^{3/2}}\frac{e^{-(1-x)^2/2t}}{x^{\mu+1/2}}\(1+O_\mu\(\frac tx\)\),\ \ \ \ \ 1/2<x<1.
\end{align*}
In addition, (\ref{eq:auxas1}) and (\ref{eq:asympI0}) give us for $0<x\leq1/2$
\begin{align*}
q_{0,1}^{(\mu)}(t,x,1)&=\frac{1-x}{\sqrt{2\pi}t^{3/2}}\frac{e^{-(1-x)^2/2t}}{x^{\mu+1/2}}\(1+O_\mu\(\frac tx\)\)\(1+O_\mu\(t\)\).
\end{align*} Since $t/x>t$, the above-given formula is equivalent to (\ref{eq:x/t->00}). This ends the proof.
\end{proof}
\section*{Acknowledgment} 
The author is very grateful to M. Kwa\'snicki for some ideas concerning proof of Theorem \ref{thm:qasp} and to   J. Ma{\l}ecki for several helpfull suggestions.

\bibliography{bibliography}
\bibliographystyle{plain}

\end{document}